\journal{ }
\theoremstyle{definition}
\newtheorem{theorem}{Theorem}           
\newtheorem{lemma}{Lemma}               
\newtheorem{definition}{Definition}
\begin{document}

\begin{frontmatter}
	
\title{On the Dynamic Consistency of a Discrete Predator-Prey Model}
\author[cmbe]{Priyanka Saha\fnref{fn1}}
\author[cmbe]{Nandadulal Bairagi\corref{cor1}}
\ead{nbairagi.math@jadavpuruniversity.in}
\cortext[cor1]{Corresponding author}
\author[ajc]{Milan Biswas\fnref{fn2}}
\address[cmbe]{Centre for Mathematical Biology and Ecology\\ Department of Mathematics, Jadavpur University\\ Kolkata-700032, India.}
\address[ajc]{A. J. C. Bose College\\  A. J. C. Bose Road \\ Kolkata-700020, India.}

\begin{abstract}
We here discretize a predator-prey model by standard Euler forward method and non-standard finite difference method and then compare their dynamic properties with the corresponding continuous-time model. We show that NSFD model preserves positivity of solutions and is completely consistent with the dynamics of the corresponding continuous-time model. On the other hand, the discrete model formulated by forward Euler method does not show dynamic consistency with its continuous counterpart. Rather it shows scheme--dependent instability when step--size restriction is violated.
\end{abstract}

\end{frontmatter}

\section{Introduction}
Nonlinear system of differential equations play very important role in studying different physical, chemical and biological phenomena. However, in general, nonlinear differential equations cannot be solved analytically and therefore discretization is inevitable for good approximation of the solutions \cite{AL00}. Another reason of constructing discrete models, at least in case of population model, is that it permits arbitrary time-step units \cite{M89,M84}. Unfortunately, conventional discretization schemes, such as Euler method, Runge-Kutta method, show dynamic inconsistency \cite{M88}. It produces spurious solutions which are not observed in its parent model and its dynamics depend on the step-size. For example, consider the simple logistic model in continuous system:
\begin{eqnarray}\label{Continuous model0-int}
\frac{dx}{dt}=rx(1-\frac{x}{K}), ~~x(0)=x_0>0,
\end{eqnarray}
where $r$ and $K$ are positive constants. The system (\ref{Continuous model0-int}) has two equilibrium points with the following dynamical properties:
\begin{enumerate}
	\item the trivial equilibrium point $x=0$ is always unstable.
	\item the nontrivial equilibrium point $x=K$ is always stable.
\end{enumerate}
Fig. 1 shows that even if we start very close to zero ($x_0=0.3$) the solution goes to $x=K=50$, implying that the system is stable around the equilibrium point $x=K$ and unstable around $x=0$.\\
\begin{center}
	\includegraphics[width=3in, height=1.75in]{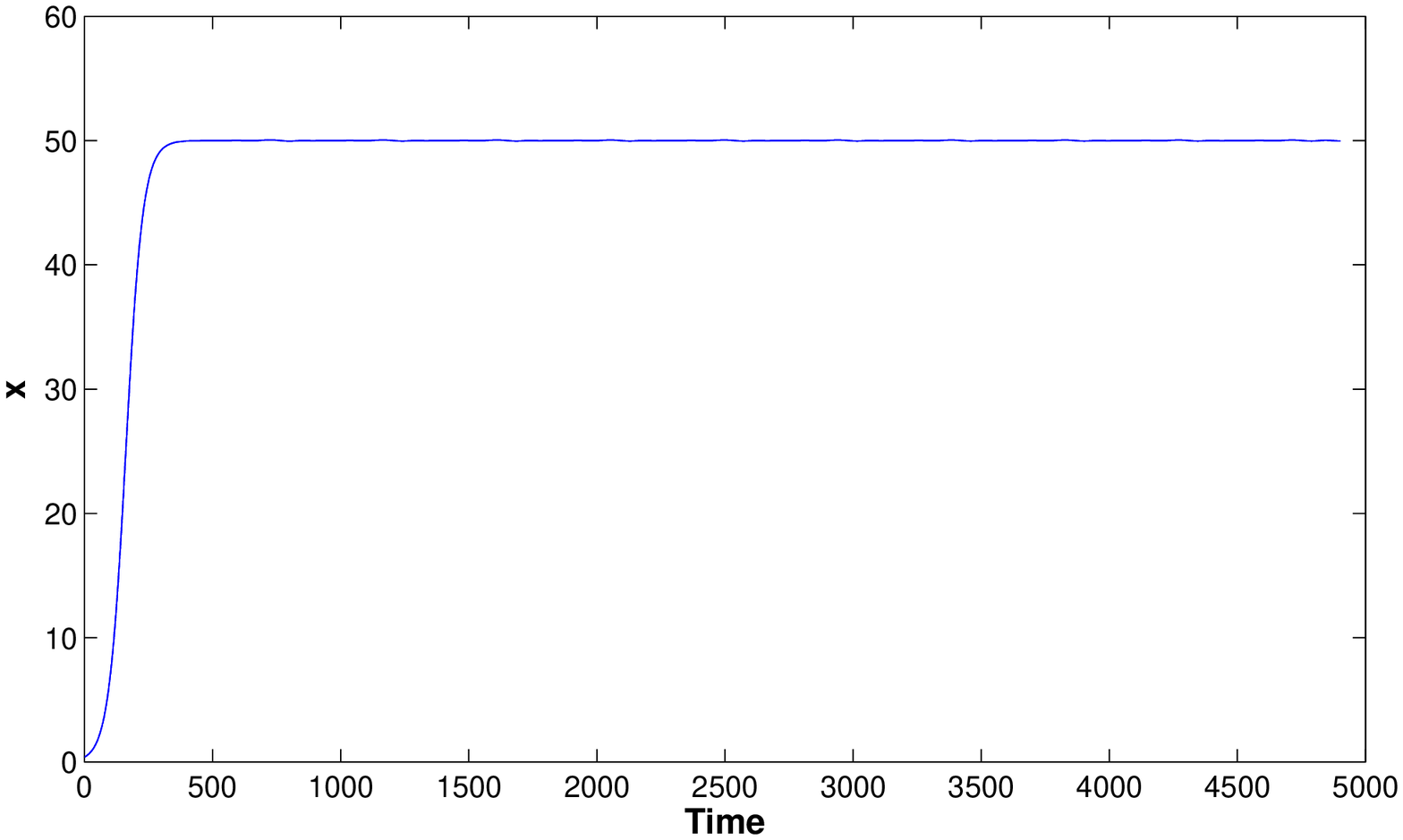}
\end{center}
{\bf Figure 1:} {\it Time series of the continuous system \eqref{Continuous model0-int}. It shows that the system (\ref{Continuous model0-int}) is stable around the interior equilibrium point $x=K$. Initial point and parameters are taken as $x(0)=0.4$, $r=3$ and $K=50$.}\\

The corresponding discrete model formulated by standard finite difference schemes (such as Euler forward method) is given by Anguelov and Lubuma\cite{AL00}
\begin{eqnarray}\label{Continuous model1-int}
\frac{x_{n+1}-x_n}{h}=rx_n(1-\frac{x_n}{K}).
\end{eqnarray}
This equation can be transformed into logistic difference equation
\begin{eqnarray}\label{discrete model-int1}
x_{n+1}=x_n+hrx_n(1-\frac{x_n}{K}),
\end{eqnarray}
where $h$ is the step-size.
The system \eqref{discrete model-int1} also has same equilibrium points with the following dynamic properties:
\begin{enumerate}
	\item the trivial equilibrium point $x=0$ is always unstable.
	\item the nontrivial equilibrium point $x=K$ is stable if $h<\frac{2}{r}$.
\end{enumerate}
The bifurcation diagram of Euler model \eqref{discrete model-int1} (Fig. 2) with $h$ as the bifurcating parameter shows that the fixed point $x=K$ changes its stability as the step-size $h$ crosses the value $\frac{2}{r}=0.666$. The fixed point is stable for $h<0.666$ and shows more complex behaviors (period doubling bifurcation) as the step-size is further increased. Thus, dynamics of Euler--forward model \eqref{discrete model-int1} depends on the step-size and exhibits spurious dynamics which are not observed in the corresponding continuous system \eqref{Continuous model0-int}.\\
\begin{center}
	\includegraphics[width=3in, height=1.75in]{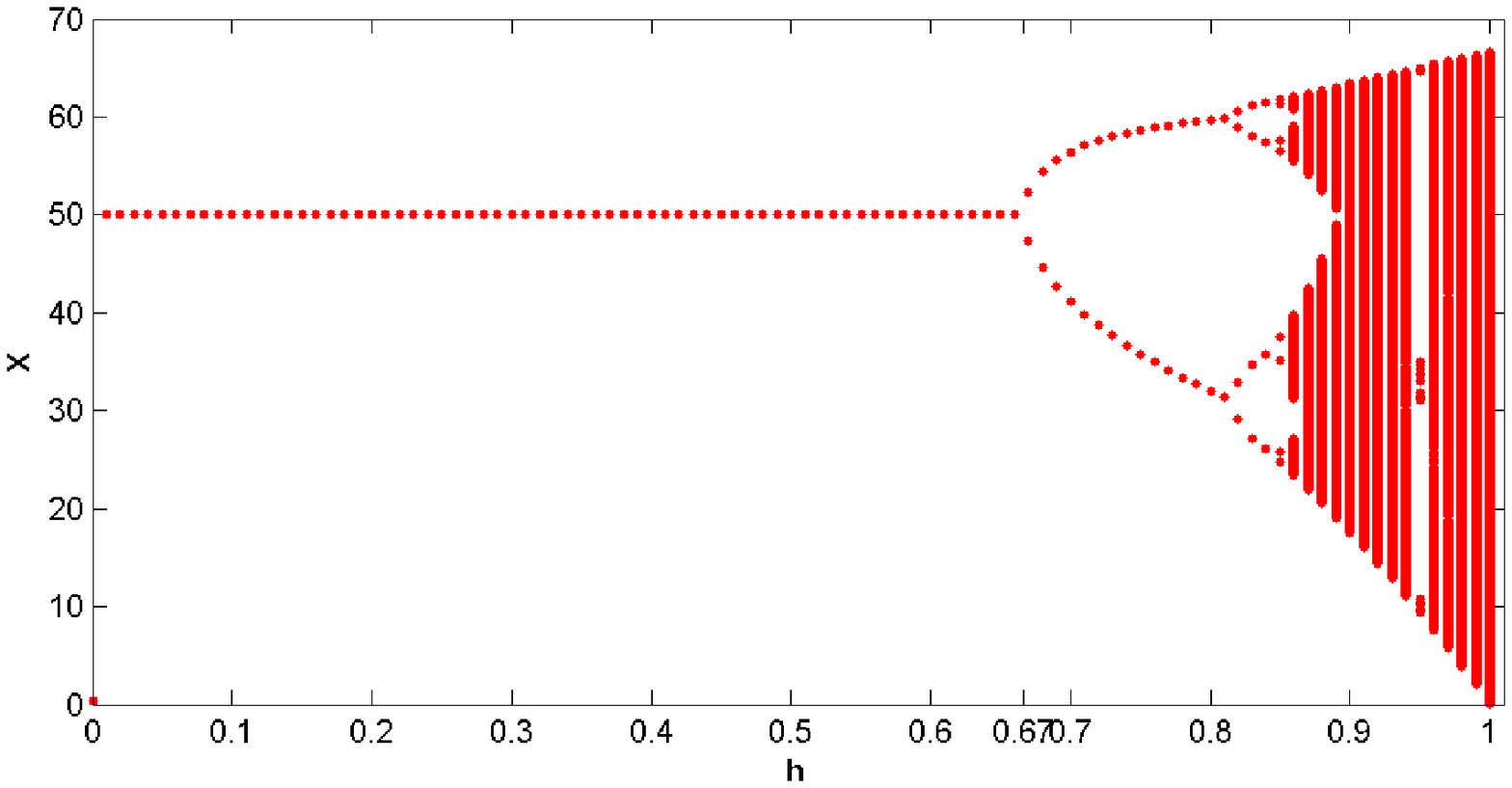}
\end{center}
\noindent\textbf{Figure 2:} {\it Bifurcation diagram of the model \eqref{discrete model-int1} with $h$ as the bifurcating parameter. It shows that the system is stable till the step-size $h$ is less than $0.666$ and unstable for higher values of $h$. Parameters and initial point are as in Fig. 1.}\\

Let us consider another simple example (decay equation)
\begin{eqnarray}\label{discrete model-int}
\frac{dx}{dt}=-\lambda x, ~ \lambda>0, x(0)=x_0>0.
\end{eqnarray}
Its solution, given by $$x(t)=x_0e^{-\lambda t},$$ is always positive. The corresponding discrete model constructed by Euler forward method is given by
\begin{eqnarray}\label{discrete model-int}
x_{n+1}=(1-\lambda h)x_n.
\end{eqnarray}
Note that its solution will not be positive if $\lambda h$ is sufficiently large and therefore supposed to show numerical instability.\\

These examples demonstrate that the discrete systems constructed by standard finite difference scheme is unable to preserve some properties of its corresponding continuous systems. Dynamic behaviors of the discrete model depend strongly on the step-size. However, on principles, the corresponding discrete system should have same properties to that of the original continuous system. It is therefore of immense importance to construct discrete model which will preserve the properties of its constituent continuous models. In the recent past, a considerable effort has been given in the construction of discrete-time model to preserve dynamic consistency of the corresponding continuous-time model without any limitation on the step-size. Mickens first proved that corresponding to any ODE, there exists an exact difference equation which has zero local truncation error \cite{M84,M88} and proposed a non-standard finite difference scheme (NSFD) in 1989 \cite{M89}. Later in 1994, he introduced the concept of elementary stability, the property which brings correspondence between the local stability at equilibria of the differential equation and the numerical method \cite{M94}. Anguelov and Lubuma \cite{AL01} formalized some of the foundations of Micken's rules, including convergence properties of non-standard finite difference schemes. They defined qualitative stability, which means that the constructed discrete system satisfies some properties like positivity of solutions, conservation laws and equilibria for any step-size. In 2005, Micken coined the term dynamic consistency, which means that a numerical method is qualitatively stable with respect to all desired properties of the solutions to the differential equation \cite{M05}. NSFD scheme has gained lot of attentions in the last few years because it generally does not show spurious behavior as compared to other standard finite difference methods. NSFD scheme has been successfully used in different fields like economics \cite{L13}, physiology \cite{SM03}, epidemic \cite{MET03,BB17a,SI10}, ecology \cite{RL13,BET17,G12,BB17} and physics \cite{M02,MO14}. Here we shall discretize a nonlinear continuous-time predator-prey system following dynamics preserving non-standard finite difference (NSFD) method introduced by Mickens \cite{M89}.\\

\noindent The paper is arranged in the following sequence. In the next section we describe the considered continuous-time model. Section 3 contains some definitions and general technique of constructing a NSFD model. Section 4 contains the analysis of NSFD and Euler models. Extensive simulations are presented in Section 5. The paper ends with the summary in Section 6.

\section{The model}
Celik \cite{C15} have investigated the following dimensionless Holling-Tanner predator-prey system with ratio-dependent functional response:
\begin{eqnarray}\label{model in continuous system}
\frac{dN}{dt} & = & N(1-N)-\frac{NP}{N+\alpha P},\\
\frac{dP}{dt} & = & \beta P(\delta -\frac{P}{N}).\nonumber
\end{eqnarray}
The state variables $N$ and $P$ represent, respectively, the density of prey and predator populations at time $t$,
and $N(t)>0,~P(t)\geq0$ for all $t$. Here $\alpha$, $\beta$ and $\delta$ are positive constants. For more description of the
model, readers are referred to the work of Celik \cite{C15}. \\

Celik \cite{C15} discussed about the existence and stability of the coexistence interior equilibrium $E^*=(N^*,P^*)$, where
\begin{eqnarray}
N^*=\frac{1+\alpha \delta-\delta}{1+\alpha \delta}, \nonumber
~~~~~P^*=\delta N^*.
\end{eqnarray}
The following results are known.\\

\noindent\textbf{Theorem 1.1.} {\it The interior equilibrium point $E^*$ of the system \eqref{model in continuous system} exists and becomes stable if
	\begin{eqnarray}\label{stability condition}
	(i) \alpha \delta +1>\delta, ~(ii) \delta (2+\alpha \delta)<(1+\alpha \delta)^2 (1+\beta \delta).\nonumber
	\end{eqnarray}}
Here we seek to construct a discrete model of the corresponding continuous model \eqref{model in continuous system} that preserves the qualitative properties of the continuous system and maintains dynamic consistency. We also construct the corresponding Euler discrete model and compare its results with the results of NSFD model.


\section{Some definitions}
Consider the differential equation
\begin{eqnarray}\label{Continuous model}
\frac{dx}{dt}=f(x,t,\lambda),
\end{eqnarray}
where $\lambda$ represents the parameter defining the system \eqref{Continuous model}. Assume that a finite difference scheme corresponding to the continuous system \eqref{Continuous model} is described by
\begin{eqnarray}\label{Discrete model}
x_{k+1}=F(x_{k},t_{k},h,\lambda).
\end{eqnarray}
We assume that $F(., ., ., .)$ is such that the proper uniqueness--existence properties holds; the step size is $h=\nabla t$ with $t_k=hk$, $k=$ integer; and $x_k$ is an approximation to $x(t_k)$.
\begin{definition} \cite{M05}\label{definition1} ~Let the differential equation \eqref{Continuous model} and/or its solutions have a property $P$. The discrete model \eqref{Discrete model} is said to be dynamically consistent with the equation \eqref{Continuous model} if it and/or its solutions also have the property $P$.
\end{definition}

\begin{definition} \cite{M05,DK05,AL03}\label{definition2}
	The NSFD procedures are based on just two fundamental rules:
	
	$~~~$(i) the discrete first--derivative has the representation
	
	$~~~~~~~~~~~~~~~$$\frac{dx}{dt} \rightarrow \frac{x_{k+1}-\psi(h)x_k}{\phi(h)}$, $h=\triangle t$,\\
	where $\phi(h)$, $\psi(h)$ satisfy the conditions
	$\psi(h)=1+O(h^2)$,~ $\phi(h)=h+O(h^2)$;
	
	$~~~$(ii) both linear and nonlinear terms may require a nonlocal representation on the discrete computational lattice; for example,
	
	$~~~~~~~~~~~~~~~$$x\rightarrow 2x_k-x_{k+1}$,~~~~ $x^3\rightarrow (\frac{x_{k+1}+x_{k-1}}{2})x_k^2$,
	
	$~~~~~~~~~~~~~~~$$x^3\rightarrow 2x_k^3-x_k^2x_{k+1}$, ~~~~$x^2\rightarrow (\frac{x_{k+1}+x_k+x_{k-1}}{3})x_k$.
	
	\noindent While no general principles currently exist for selecting the functions $\psi(h)$ and $\phi(h)$, particular forms for a specific equation can easily be determined. Functional forms commonly used for $\psi(h)$ and $\phi(h)$ are
	$$\phi(h)=\frac{1-e^{-\lambda h}}{\lambda}, ~\psi(h)=cos(\lambda h),$$
	where $\lambda$ is some parameter appearing in the differential equation.
\end{definition}
\begin{definition}\label{definition3}
	The finite difference method \eqref{Discrete model} is called positive if for any value of the step size $h$, solution of the discrete system remains positive for all positive initial values.
\end{definition}
\begin{definition}\label{definition4}
	The finite difference method \eqref{Discrete model} is called elementary stable if for any value of the step size $h$, the fixed points of the difference equation are those of the differential system and the linear stability properties of each fixed point being the same for both the differential system and the discrete system.
\end{definition}
\begin{definition} \cite{DK06}\label{definition5}
	A method that follows the Mickens rules (given in the Definition 3.2) and preserves the positivity of the solutions is called positive and elementary stable nonstandard (PESN) method.
\end{definition}

\section{Nonstandard finite difference (NSFD) model}
For convenience, at first we can write the continuous system \eqref{model in continuous system} as
\begin{eqnarray}\label{model in continuous system 2}
\frac{dN}{dt} & = & N-N^2-\frac{NP}{(N+\alpha P)}+(N-N)(N+\alpha P),\\
\frac{dP}{dt} & = & \beta \delta P-\frac{\beta P^2}{N}.\nonumber
\end{eqnarray}\\
Now we express the above system as follows:
\begin{eqnarray}\label{continuous form}
\frac{dN}{dt} & = & N-N^2-NA(N,P)+(N-N)B(N,P),\\
\frac{dP}{dt} & = & \beta \delta P-\beta PC(N,P),\nonumber
\end{eqnarray}\\
where $A(N,P)=\frac{P}{N+\alpha P}$, $B(N,P)=(N+\alpha P)$ and $C(N,P)=\frac{P}{N}$.\\
We employ the following non-local approximations termwise for the system \eqref{continuous form}:
\begin{eqnarray}\label{Nonlocal approxmiation}
\left\{
\begin{array}{ll}
\frac{dN}{dt}\rightarrow\frac{N_{n+1}-N_n}{h},~~~~~~~~~~~~~~~~~~\frac{dP}{dt}\rightarrow\frac{P_{n+1}-P_n}{h}\\
N\rightarrow N_n,~~~~~~~~~~~~~~~~~~~~~~~~~~~~P\rightarrow P_n,\\
N^2\rightarrow N_nN_{n+1},\\
PC(N,P)\rightarrow P_{n+1}C(N_n,P_n),\\
NA(N,P)\rightarrow N_{n+1}A(N_n,P_n),\\
(N-N)B(N,P)\rightarrow (N_n-N_{n+1})B(N_n,P_n),\\
\end{array}
\right.
\end{eqnarray}
where $h~(>0)$ is the step-size.\\
By these transformations, the continuous-time system \eqref{model in continuous system 2} is converted to
\begin{eqnarray}\label{discrete system}
\frac{N_{n+1}-N_n}{h}&=&N_n-N_n N_{n+1}-\frac{N_{n+1}P_n}{N_n+\alpha P_n}+(N_n-N_{n+1})(N_n+\alpha P_n),\nonumber \\
\frac{P_{n+1}-P_n}{h}&=&\beta \delta P_n-\frac{\beta P_{n+1}P_n }{N_n}.
\end{eqnarray}
System \eqref{discrete system} can be simplified to
\begin{eqnarray}\label{model in discrete system}
N_{n+1} & = & \frac{N_n\{1+h+h(N_n+\alpha P_n)\}(N_n+\alpha P_n)}{(1+2hN_n+\alpha hP_n)(N_n+\alpha P_n)+hP_n},\\
P_{n+1} & = & \frac{P_nN_n(1+\beta \delta h)}{N_n+\beta hP_n}.\nonumber
\end{eqnarray}
Note that all solutions of the discrete-time system \eqref{model in discrete system} remains positive for any step-size if they start with positive initial values. Therefore, the system \eqref{model in discrete system} is positive.
\subsection{Existence of fixed points}
Fixed points of the system \eqref{model in discrete system} are the solutions of the coupled algebraic equations obtained by putting $N_{n+1}=N_n=N$ and $P_{n+1}=P_n=P$ in \eqref{model in discrete system}. However, the fixed points can be obtained more easily from \eqref{discrete system} with the same  substitutions. Thus, fixed points are the solutions of the following nonlinear algebraic equations:
\begin{eqnarray}\label{existence1}
N-N^2-\frac{NP}{N+\alpha P}=0,\\
\beta \delta P-\frac{\beta P^2}{N}=0.\nonumber
\end{eqnarray}
It is easy to observe that $E_1=(1,0)$ is the predator-free fixed point. The interior fixed point $E^*=(N^*,P^*)$ satisfies
\begin{eqnarray}\label{existence2}
1-N^*-\frac{P^*}{N^*+\alpha P^*}=0 ~and~ \delta-\frac{P^*}{N^*}=0.
\end{eqnarray}
From the second equation of \eqref{existence2}, we have $P^*=\delta N^*$. Substituting $P^*$ in the first equation of \eqref{existence2}, we find $N^*=\frac{1+\alpha \delta-\delta}{1+\alpha \delta}$, which is always positive if $1+\alpha \delta >\delta$. Thus  the positive fixed point $E^*$ exists if $1+\alpha \delta>\delta$.
\subsection{Stability analysis of fixed points}
The variational matrix of system \eqref{model in discrete system} evaluated at an arbitrary fixed point $(N,P)$ is given by\\
\begin{equation} \label{jacobian1}J(N,P)=\left(
\begin{array}{cc}
a_{11} ~~ a_{12}\\
a_{21} ~~ a_{22}\\
\end{array}
\right),\end{equation}\\
where
\begin{equation}\label{jacobian}
\left\{
\begin{array}{ll}
a_{11} = \frac{\{1+h+h(N_n+\alpha P_n)\}(N_n+\alpha P_n)}{(1+2hN_n+\alpha hP_n)(N_n+\alpha P_n)+hP_n}+\frac{hN_n(N_n+\alpha P_n)}{(1+2hN_n+\alpha hP_n)(N_n+\alpha P_n)+hP_n}\\
~~~~~~~~+\frac{N_n\{1+h+h(N_n+\alpha P_n)\}}{(1+2hN_n+\alpha hP_n)(N_n+\alpha P_n)+hP_n}\\
~~~~~~~~-\frac{N_n\{1+h+h(N_n+\alpha P_n)\}(N_n+\alpha P_n)\{2h(N_n+\alpha P_n)+(1+2hN_n+\alpha hP_n)\}}{\{(1+2hN_n+\alpha h P_n)(N_n+\alpha P_n)+hP_n\}^2},\\
\\
a_{12}=\frac{\alpha hN_n(N_n+\alpha P_n)}{(1+2hN_n+\alpha hP_n)(N_n+\alpha P_n)+hP_n}+\frac{\alpha N_n\{1+h+h(N_n+\alpha P_n)\}}{(1+2hN_n+\alpha hP_n)(N_n+\alpha P_n)+hP_n}\\
~~~~~~~~ -\frac{N_n\{1+h+h(N_n+\alpha P_n)\}(N_n+\alpha P_n)\{\alpha h(N_n+\alpha P_n)+\alpha (1+2hN_n+\alpha hP_n)+h\}}{\{(1+2hN_n+\alpha hP_n)(N_n+\alpha P_n)+hP_n\}^2},\\
\\
a_{21}=\frac{P_n(1+\beta \delta h)}{N_n+\beta hP_n}-\frac{P_nN_n(1+\beta \delta h)}{(N_n+\beta hP_n)^2},\\
\\
a_{22}=\frac{(1+\beta \delta h)N_n}{N_n+\beta hP_n}-\frac{\beta hP_nN_n(1+\beta \delta h)}{(N_n+\beta hP_n)^2}.\nonumber
\end{array}
\right.
\end{equation}
Let $\lambda_{1}$ and $\lambda_{2}$ be the eigenvalues of the variational matrix \eqref{jacobian1} then we give the following definition in relation to the stability of the system \eqref{model in discrete system}.
\begin{definition}\label{definition6}
	A fixed point $(x,y)$ of the system \eqref{model in discrete system} is called stable if $\left|\lambda_{1}\right|<1$, $\left|\lambda_{2}\right|<1$ and a source if $\left|\lambda_{1}\right|>1$, $\left|\lambda_{2}\right|>1$. It is called a saddle if $\left|\lambda_{1}\right|<1$, $\left|\lambda_{2}\right|>1$ or $\left|\lambda_{1}\right|>1$, $\left|\lambda_{2}\right|<1$ and a non--hyperbolic fixed point if either $\left|\lambda_{1}\right|=1$ or $\left|\lambda_{2}\right|=1$.
\end{definition}
\begin{lemma} \cite{LG13}\label{lemma}
	Let $\lambda_{1}$ and $\lambda_{2}$ be the eigenvalues of the variational matrix \eqref{jacobian1}.
	Then $\left|\lambda_{1}\right|<1$ and $\left|\lambda_{2}\right|<1$ iff
	$(i) 1-det(J)>0, (ii) 1-trace(J)+det(J)>0$ and $(iii) 0<a_{11}<1,~ 0<a_{22}<1.$
\end{lemma}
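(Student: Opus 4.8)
The plan is to reduce the two-sided eigenvalue condition to the classical Schur--Cohn (Jury) criterion for a real monic quadratic and then match that criterion against the three conditions (i)--(iii). Write $T=\mathrm{tr}(J)=a_{11}+a_{22}$ and $D=\det(J)$, so that $\lambda_1,\lambda_2$ are the roots of the characteristic polynomial $F(\lambda)=\lambda^2-T\lambda+D$. The standard criterion states that both roots lie strictly inside the unit disk if and only if $F(1)>0$, $F(-1)>0$ and $|D|<1$, where $F(1)=1-T+D$ and $F(-1)=1+T+D$. I would first record this criterion (either cited or proved by splitting into the real-root and complex-conjugate-root cases via $F(1)=(1-\lambda_1)(1-\lambda_2)$ and $D=\lambda_1\lambda_2$) and then compare it term by term with (i)--(iii).

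For the sufficiency direction (the conditions imply stability) the argument is short. Condition (ii) is exactly $F(1)>0$, and condition (i) gives $D<1$. From (iii) we have $T=a_{11}+a_{22}\in(0,2)$, in particular $T>0$; combined with (ii), which rearranges to $D>T-1$, this yields $D>-1$ and hence $|D|<1$. Finally $F(-1)=1+T+D>1+T+(T-1)=2T>0$. Thus all three Schur--Cohn inequalities hold, so $|\lambda_1|<1$ and $|\lambda_2|<1$. This half of the equivalence is clean and requires only elementary algebra.

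The necessity direction is where I expect the main obstacle. Assuming $|\lambda_1|,|\lambda_2|<1$, conditions (i) and (ii) follow at once: $|D|=|\lambda_1\lambda_2|<1$ gives (i), and $F(1)=(1-\lambda_1)(1-\lambda_2)>0$ (positive for real roots in $(-1,1)$, and equal to $(1-a)^2+b^2$ for complex roots $a\pm bi$) gives (ii). Condition (iii), however, does \emph{not} follow from the location of the eigenvalues by linear algebra alone: a small skew-symmetric matrix with off-diagonal entries $\pm\varepsilon$ has both eigenvalues strictly inside the unit disk, yet $a_{11}=a_{22}=0\notin(0,1)$. Hence the trapping $0<a_{11}<1$, $0<a_{22}<1$ must be extracted from the \emph{explicit} closed forms of the diagonal entries in \eqref{jacobian}, using the positivity $N_n,P_n>0$, the positivity of $\alpha,\beta,\delta$, and the existence condition $1+\alpha\delta>\delta$. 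I would therefore expect the real difficulty to be a careful estimation of $a_{11}$ and $a_{22}$ from their rational expressions, showing they are confined to $(0,1)$ for every admissible fixed point and step size $h$; only with this structural input does the stated equivalence become a genuine ``iff'' rather than a purely sufficient condition, and I would flag this as the step most likely to need the specific algebraic structure of the NSFD scheme rather than general matrix theory.
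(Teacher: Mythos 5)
The paper never proves this lemma: it is quoted from Roeger and Lahodny \cite{LG13} and used as a black box, so there is no in-paper argument to compare yours against. Judged on its own merits, your sufficiency direction is correct and complete: with $T=\mathrm{trace}(J)$ and $D=\det(J)$, conditions (i)--(iii) give $D<1$ and $F(1)=1-T+D>0$, and since (ii) rearranges to $D>T-1$ while (iii) forces $T>0$, also $F(-1)=1+T+D>2T>0$, so the Schur--Cohn test is passed. That is exactly the role the lemma plays downstream: in Theorem 2 the authors check (i), (ii) and $\mathrm{trace}(J)>0$ (a weaker substitute for (iii) that feeds the same lower bound on $F(-1)$), and in Theorem 3 they check $1+\mathrm{trace}(J)+\det(J)>0$ directly. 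Neither application ever verifies $0<a_{11}<1$, $0<a_{22}<1$ in the form stated.

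Your diagnosis of the necessity direction is also correct, and it exposes a defect in the statement rather than a gap in your reasoning: for an arbitrary $2\times 2$ real matrix, eigenvalues in the open unit disk do not force the diagonal entries into $(0,1)$ (your example with $a_{11}=a_{22}=0$, $a_{12}=-a_{21}=\varepsilon$ is a valid counterexample), so the ``iff'' can only be salvaged by deriving (iii) separately from the explicit entries \eqref{jacobian} under the positivity and existence hypotheses. You do not carry out that estimation, so as a proof of the stated biconditional your proposal is incomplete --- but since the paper neither supplies nor uses that half, the honest reading is that the lemma functions only as a sufficient condition, and that half you do establish cleanly.
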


%

\begin{theorem} {\it Suppose that conditions of Theorem 1.1 hold. Then the fixed point $E^*$ of the system \eqref{model in discrete system} is locally asymptotically stable.}\end{theorem}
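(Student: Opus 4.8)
The plan is to apply the Schur--Cohn criterion recorded in Lemma~\ref{lemma}: for the $2\times2$ matrix $J(E^*)$ one shows $|\lambda_{1}|,|\lambda_{2}|<1$ by checking $\det J<1$, $1-\mathrm{tr}\,J+\det J>0$ and $1+\mathrm{tr}\,J+\det J>0$. First I would evaluate the entries $a_{ij}$ at $E^{*}$ using the fixed--point relations $P^{*}=\delta N^{*}$, $1-N^{*}=\delta/(1+\alpha\delta)$ and $S:=N^{*}+\alpha P^{*}=N^{*}(1+\alpha\delta)$, together with the key simplification that at $E^{*}$ the common denominator of \eqref{model in discrete system} collapses to $(1+2hN^{*}+\alpha hP^{*})S+hP^{*}=(1+h+hS)S$. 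The predator row is then immediate, since $N^{*}+\beta hP^{*}=N^{*}(1+\beta\delta h)$ gives $a_{22}=1/(1+\beta\delta h)$ and $a_{21}=\beta\delta^{2}h/(1+\beta\delta h)$; a short implicit differentiation of the prey update yields, with $G:=1+h+hS$, the compact forms $a_{12}=-hN^{*2}/(GS^{2})$ and $a_{11}=1-\tfrac{hN^{*}}{G}\bigl(1-P^{*}/S^{2}\bigr)$. In particular $a_{11},a_{21}>0$, $a_{12}<0$ and $0<a_{22}<1$ for every $h>0$.

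Two of the three inequalities should then fall out \emph{unconditionally}. Writing $1-\mathrm{tr}\,J+\det J=(1-a_{11})(1-a_{22})-a_{12}a_{21}$ and substituting the entries, a factor $(S^{2}-P^{*})+\delta N^{*}$ emerges; since $P^{*}=\delta N^{*}$ this collapses to $S^{2}$, leaving the manifestly positive $1-\mathrm{tr}\,J+\det J=\beta\delta h^{2}N^{*}/[G(1+\beta\delta h)]$. Likewise $1+\mathrm{tr}\,J+\det J=(1+a_{11})(1+a_{22})-a_{12}a_{21}$ is a sum of positive quantities, hence positive, for all $h>0$. Thus the entire burden of the proof, and the only place the hypotheses can enter, is the remaining inequality $\det J<1$.

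This is the step I expect to be the main obstacle, and the crux of the dynamic--consistency claim. After clearing $G$ and $1+\beta\delta h$ and dividing by $h$, the inequality $\det J<1$ becomes \emph{linear} in the step size, of the form $A+Bh<C+Dh$ with $A=[\delta(2+\alpha\delta)-(1+\alpha\delta)^{2}]/(1+\alpha\delta)^{2}$, $C=\beta\delta$, $B=\beta\delta^{2}/(1+\alpha\delta)^{2}$ and $D=\beta\delta(2+\alpha\delta-\delta)$. I would then note that the intercept condition $A<C$ is \emph{precisely} hypothesis (ii) of Theorem~1.1, while the slope condition $B<D$ amounts to $\delta<(2+\alpha\delta-\delta)(1+\alpha\delta)^{2}$, which follows from the existence hypothesis (i): there $2+\alpha\delta-\delta>1$ and $(1+\alpha\delta)^{2}>1+\alpha\delta>\delta$. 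Both inequalities being strict, $A+Bh<C+Dh$ for all $h\ge0$, so $\det J<1$ holds for \emph{every} step size, and the three Schur--Cohn conditions together give local asymptotic stability of $E^{*}$ with no restriction on $h$.

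I would flag one genuine subtlety. The clause $0<a_{11}<1$ in Lemma~\ref{lemma} is only \emph{sufficient} for $|\lambda_i|<1$, not necessary: from the formula above $a_{11}<1$ is equivalent to $(1+\alpha\delta)^{2}>\delta(2+\alpha\delta)$, which is strictly stronger than Theorem~1.1(ii) and can fail under its hypotheses (e.g.\ when $\delta$ is near the existence boundary $1+\alpha\delta$ and $\beta$ is not small), while the eigenvalues still lie inside the unit disc. The proof must therefore rest on the $1+\mathrm{tr}\,J+\det J>0$ branch established above rather than on $0<a_{11}<1$; this is the point on which a careless application of the lemma would break down.
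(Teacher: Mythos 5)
Your proposal is correct and follows essentially the same route as the paper: both evaluate the Jacobian at $E^*$ using $P^*=\delta N^*$ and check the Schur--Cohn/Jury conditions, with $1-\mathrm{tr}\,J+\det J>0$ holding unconditionally (your expression $\beta\delta h^2N^*/[G(1+\beta\delta h)]$ agrees with the paper's $\beta\delta h^2(N^*)^2(1+\alpha\delta-\delta)/(GH)$ after simplification) and $\det J<1$ reducing, linearly in $h$, to hypothesis (ii) for the intercept and the existence condition for the slope, exactly as in the paper's splitting of $1-\det J$ into its $h$ and $h^2$ coefficients. Your closing caveat is well taken and is the only substantive divergence: the paper never verifies clause (iii) of Lemma~\ref{lemma} --- and indeed $a_{11}<1$ is equivalent to $(1+\alpha\delta)^2>\delta(2+\alpha\delta)$, which can fail under the stated hypotheses --- but it does establish $\mathrm{tr}\,J>0$, which together with $\det J<1$ and $1-\mathrm{tr}\,J+\det J>0$ implies your $1+\mathrm{tr}\,J+\det J>0$, so both arguments ultimately rest on the same three Jury inequalities and the conclusion stands for every step size $h>0$.
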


\noindent\textbf{Proof.} At the interior fixed point $E^*$, the variational matrix reads as\\
$$J(N^*,P^*)=\left(
\begin{array}{cc}
a_{11}^* ~~ a_{12}^*\\
a_{21}^* ~~ a_{22}^*
\end{array}
\right),$$\\
where
\begin{eqnarray}\label{interior jacobian}
\left\{
\begin{array}{ll}
a_{11}^*=1+\frac{N^*h(1-2N^*-\alpha P^*)}{G},\\
a_{12}^*=\frac{N^*h(\alpha-\alpha N^*-1)}{G},\\
a_{21}^*=\frac{\beta \delta hP^*}{H},\\
a_{22}^*=1-\frac{\beta hP^*}{H}
\end{array}
\right.
\end{eqnarray}\\
with $G=\{1+h+h(N^*+\alpha P^*)\}(N^*+\alpha P^*)$ and $H=(1+\beta \delta h)N^*$.\\
Using $P^*=\delta N^*$ in \eqref{interior jacobian}, we have
\begin{eqnarray}\label{interior jacobian 1}
\left\{
\begin{array}{ll}
a_{11}^*=1+\frac{N^*h(1-2N^*-\alpha \delta N^*)}{G},\\
a_{12}^*=\frac{N^*h(\alpha-\alpha N^*-1)}{G},\\
a_{21}^*=\frac{\beta \delta^2 hN^*}{H},\\
a_{22}^*=1-\frac{\beta \delta h N^*}{H}.
\end{array}
\right.
\end{eqnarray}
One can compute that $1-det(J)=-\frac{(N^*)^2h\{(1-\beta \delta-\alpha \beta \delta^2)-(2+\alpha \delta)N^*\}}{GH}+\frac{\beta \delta h^2(N^*)^2\{N^*(1+\alpha \delta)+N^*(1+\alpha \delta)^2+\frac{\alpha \delta^2}{1+\alpha \delta}\}}{GH}>0$, provided $-(1-\beta \delta-\alpha \beta \delta^2)+(2+\alpha \delta)N^*>0$, i.e. $\delta (2+\alpha \delta)<(1+\alpha \delta)^2(1+\beta \delta)$.
Note that $trace(J)=\frac{(N^*)^2}{GH}[(1+\alpha \delta)\{2+h(2+\beta\delta+2N^*\alpha \delta)\}+h(1+N^*\alpha \delta)+h^2\beta \delta\{\frac{2\delta}{1+\alpha \delta}+\alpha \delta(1+N^*+N^*\alpha \delta)+N^*\}]>0$ and $1-trace(J)+det(J)=\frac{\beta \delta h^2 (N^*)^2(1+\alpha \delta-\delta)}{GH}>0$, following the existing condition of $E^*$. Therefore, the positive fixed point $E^*$ is locally asymptotically stable provided conditions of Theorem 1.1 hold. Hence the theorem is proven.
\subsection{The Euler forward method}
By Euler's forward method, we transform the continuous model \eqref{model in continuous system} in the following discrete model:
\begin{eqnarray} \label{model in Euler system}
\frac{N_{n+1}-N_{n}}{h}&=&N_n[1-N_n-\frac{P_n}{N_n+\alpha P_n}],\\
\frac {P_{n+1}-P_{n}}{h}&=&\beta P_n[\delta-\frac{P_n}{P_n}],\nonumber
\end{eqnarray}
where $h>0$ is the step size.
Rearranging the above equations, we have
\begin{eqnarray} \label{model in Euler system1}
N_{n+1}&=&N_{n}+h N_n[1-N_n-\frac{P_n}{N_n+\alpha P_n}],\\
P_{n+1}&=&P_{n}+h\beta P_n[\delta-\frac{P_n}{N_n}]. \nonumber
\end{eqnarray}
It is to be noticed that the system \eqref{model in Euler system1} with positive initial values is not unconditionally positive due to the presence of negative terms. The system may therefore exhibit spurious behaviors and numerical instabilities \cite{M94}.
\subsubsection{Existence and stability of fixed points}
At the fixed point, we substitute $N_{n+1}=N_{n}=N$ and $P_{n+1}=P_{n}=P$. One can easily compute that \eqref{model in Euler system1} has the same interior fixed points as in the previous case. The fixed point $E_1=(1,0)$ always exist and the fixed point $E^{*}=(N^{*},P^{*})$ exists if $1+\alpha \delta>\delta$, where $N^*=\frac{1+\alpha \delta-\delta}{1+\alpha \delta}$, $P^* = \delta N^*$. We are interested for interior equilibrium only.\\
The variational matrix of the system \eqref{model in Euler system1} at any arbitrary fixed point $(N, P)$ is given by
$$J(x,y)=\left(
\begin{array}{cc}
a_{11} & a_{12} \\
a_{21} & a_{22}  \\
\end{array}
\right),$$

\begin{eqnarray}\label{Jacobian Matrix Elements}
\nonumber
\mbox{where}
\left\{
\begin{array}{ll}
a_{11}=1+h[1-N_n-\frac{P_n}{N_n+\alpha P_n}]+h N_n[-1+\frac{P_n}{(N_n+\alpha P_n)^2}],\\
a_{12}=-h (\frac{N_n}{N_n+\alpha P_n})^2,\\
a_{21}=h \beta (\frac{P_n}{N_n})^2,\\
a_{22}=1+h[\beta \delta-\frac{\beta P_n}{N_n}-\beta \frac{P_n}{N_n}].
\end{array}
\right.
\end{eqnarray}

\begin{theorem}
	 Suppose that the conditions of Theorem 1.1 hold. The interior fixed point $E^*$ of the system \eqref{model in Euler system1} is then locally asymptotically stable if $h<min[\frac{G}{H},\frac{2(1+\alpha \delta)^2}{G}]$, where $G=(1+\alpha \delta)^2 (1+\beta \delta)-\delta (2+\alpha \delta)$, $H=\beta \delta (1+\alpha \delta-\delta)(1+\alpha \delta).$
\end{theorem}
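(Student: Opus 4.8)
The plan is to apply the Jury (Schur--Cohn) criterion of Lemma 1 to the variational matrix $J(N^*,P^*)$, exactly as in the analysis of the NSFD model. The first and most important step is to evaluate the entries $a_{11},a_{12},a_{21},a_{22}$ of \eqref{Jacobian Matrix Elements} at $E^*$, where the fixed-point identities do almost all of the work: from $P^*=\delta N^*$ one gets $N^*+\alpha P^*=(1+\alpha\delta)N^*$ and $\frac{P^*}{N^*+\alpha P^*}=\frac{\delta}{1+\alpha\delta}$, while the prey equation forces $1-N^*-\frac{P^*}{N^*+\alpha P^*}=0$. Substituting these should annihilate the bracketed growth terms and collapse the entries to
\[
a_{11}^*=1-hQ,\quad a_{12}^*=-\frac{h}{(1+\alpha\delta)^2},\quad a_{21}^*=h\beta\delta^2,\quad a_{22}^*=1-h\beta\delta,
\]
where $Q=\dfrac{(1+\alpha\delta)^2-\delta(2+\alpha\delta)}{(1+\alpha\delta)^2}$. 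It is worth recording here the two identities $Q+\beta\delta=\frac{G}{(1+\alpha\delta)^2}$ and $Q+\frac{\delta}{(1+\alpha\delta)^2}=N^*$, since both $G$ and $N^*$ resurface in the stability quantities.

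With the entries in hand I would verify the three inequalities guaranteeing $|\lambda_1|,|\lambda_2|<1$, namely $\det(J)<1$, $1-\mathrm{trace}(J)+\det(J)>0$ and $1+\mathrm{trace}(J)+\det(J)>0$. The second, written as $(1-a_{11}^*)(1-a_{22}^*)-a_{12}^*a_{21}^*$, should collapse to $h^2\beta\delta\bigl(Q+\frac{\delta}{(1+\alpha\delta)^2}\bigr)=h^2\beta\delta N^*>0$, so it holds automatically whenever $E^*$ exists, mirroring the NSFD case. A direct expansion of $1-\det(J)$ should give $h(Q+\beta\delta)-h^2\beta\delta N^*=h\{(Q+\beta\delta)-h\beta\delta N^*\}$, which is positive exactly when $h<\frac{Q+\beta\delta}{\beta\delta N^*}$; using the two identities this ratio simplifies to precisely $G/H$, and it is a positive bound because $G>0$ is condition (ii) of Theorem 1.1 while $H>0$ follows from condition (i).

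The remaining constraint $1+\mathrm{trace}(J)+\det(J)>0$ should expand to the quadratic $4-2h(Q+\beta\delta)+h^2\beta\delta N^*>0$. Rather than solving this quadratic exactly, I would exploit the positivity of the $h^2$ coefficient: it is enough to force the linear part to be nonnegative, i.e. $h<\frac{2}{Q+\beta\delta}=\frac{2(1+\alpha\delta)^2}{G}$, which is exactly the second entry of the claimed minimum. Combining the three requirements then delivers local asymptotic stability for $h<\min\bigl[\frac{G}{H},\frac{2(1+\alpha\delta)^2}{G}\bigr]$.

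I expect the main obstacle to be $a_{11}^*$. Unlike $a_{22}^*=1-h\beta\delta$, whose sign is controlled directly, the quantity $Q$ need not be positive under the hypotheses of Theorem 1.1, which only guarantee $G>0$ and not $(1+\alpha\delta)^2>\delta(2+\alpha\delta)$; hence $a_{11}^*$ may exceed $1$ and a naive $0<a_{11}^*<1$ sign check cannot produce the correct bound. This is precisely why the boundedness of the eigenvalues must instead be extracted from $1+\mathrm{trace}(J)+\det(J)>0$, which is what yields the genuinely restrictive factor $2(1+\alpha\delta)^2/G$. Keeping $G$ and $H$ positive under the Theorem 1.1 hypotheses, and showing that the two step-size bounds emerge cleanly from these two inequalities, is where the care is needed; the remaining manipulations are routine.
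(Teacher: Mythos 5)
Your proposal is correct and follows essentially the same route as the paper: evaluate the Euler Jacobian at $E^*$ using $P^*=\delta N^*$ and the fixed-point identity, then impose the three Jury conditions $\det(J)<1$, $1-\mathrm{trace}(J)+\det(J)>0$ and $1+\mathrm{trace}(J)+\det(J)>0$, which produce exactly the bounds $G/H$ and $2(1+\alpha\delta)^2/G$. Your reduction to $Q$ and the remark that a naive $0<a_{11}^*<1$ check cannot work are sound, and your computation in fact corrects a sign slip in the paper's own write-up (which states condition (i) is ``satisfied if $h>\frac{G}{H}$'' where $h<\frac{G}{H}$ is clearly intended, cf.\ Remark 4.1).
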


\begin{proof} At the interior equilibrium point $E^*$, the Jacobian matrix is evaluated as $$J(N^{*},P^{*})=\left(
	\begin{array}{cc}
	a_{11} & a_{12} \\
	a_{21} & a_{22}  \\
	\end{array}
	\right),$$
	where $a_{11}=1-hN^*[1-\frac{P^*}{(N^*+\alpha P^*)^2}]$, $a_{12}=-h(\frac{N^*}{N^*+\alpha P^*})^2$, $a_{21}=h\beta (\frac{P^*}{N^*})^2$, $a_{22}=1-h\beta \frac{P^*}{N^*}$. Note that $1-trace(J)+det(J) =h^2 \beta P^*$ is always positive, following the existence conditions of $E^*$. Thus, condition (ii) of Lemma 4.1 is satisfied. One can compute that $det(J)=1-h N^*[\frac{G}{H}-h]$. Here $H$ is positive following the existence condition of $E^*$ and $G>0$ if $(1+\alpha \delta)^2 (1+\beta \delta)>\delta (2+\alpha \delta)$. Thus condition (i) of Lemma 4.1 is satisfied if $h>\frac{G}{H}$. Simple computations give $1+trace(J)+det(J)=2(2-h\frac{G}{(1+\alpha \delta)^2})+h^2H$. This expression will be positive if $0<h<\frac{2(1+\alpha \delta)^2}{G}$. Therefore, coexistence equilibrium point $E^*$ exists and becomes stable if $1+\alpha \delta>\delta$, $\delta (2+\alpha \delta)<(1+\alpha \delta)^2 (1+\beta \delta)$ and $h<min[\frac{G}{H},\frac{2(1+\alpha \delta)^2}{G}]$. Hence the theorem.
\end{proof}
\noindent {\bf Remark 4.1.} Note that if $h>\frac{G}{H}$ then $E^*$ is unstable even when the other two conditions are satisfied.
\section{Numerical simulations}
In this section, we present some numerical simulations to validate our analytical results of the NSFD discrete system \eqref{model in discrete system} and the Euler system \eqref{model in Euler system1} with their continuous counterpart \eqref{model in continuous system}. For this experiment, we consider the parameters set as in Celik \cite{C15}: $\alpha=0.7, \beta=0.9, \delta=0.6$.
The step size is kept fixed at $h=0.1$ in all simulations, if not stated otherwise. We consider the initial value $I_1=(0.2, 0.2)$ as in Celik \cite{C15} for all simulations. For the above parameter set, the interior fixed point is evaluated as $E^*=(N^*, P^*)= (0.5775, 0.3465)$. We first reproduce the phase plane diagrams (Fig. 3) of the continuous system \eqref{model in continuous system}, the NSFD discrete system \eqref{model in discrete system} and the Euler discrete system \eqref{model in Euler system1} by using ODE45 of the software Matlab 7.11. Following the analytical results stated in the section 3, the phase plane diagrams show that the equilibrium $E^*$ is stable for all three cases.
\begin{center}
	\includegraphics[width=2in, height=1.5in]{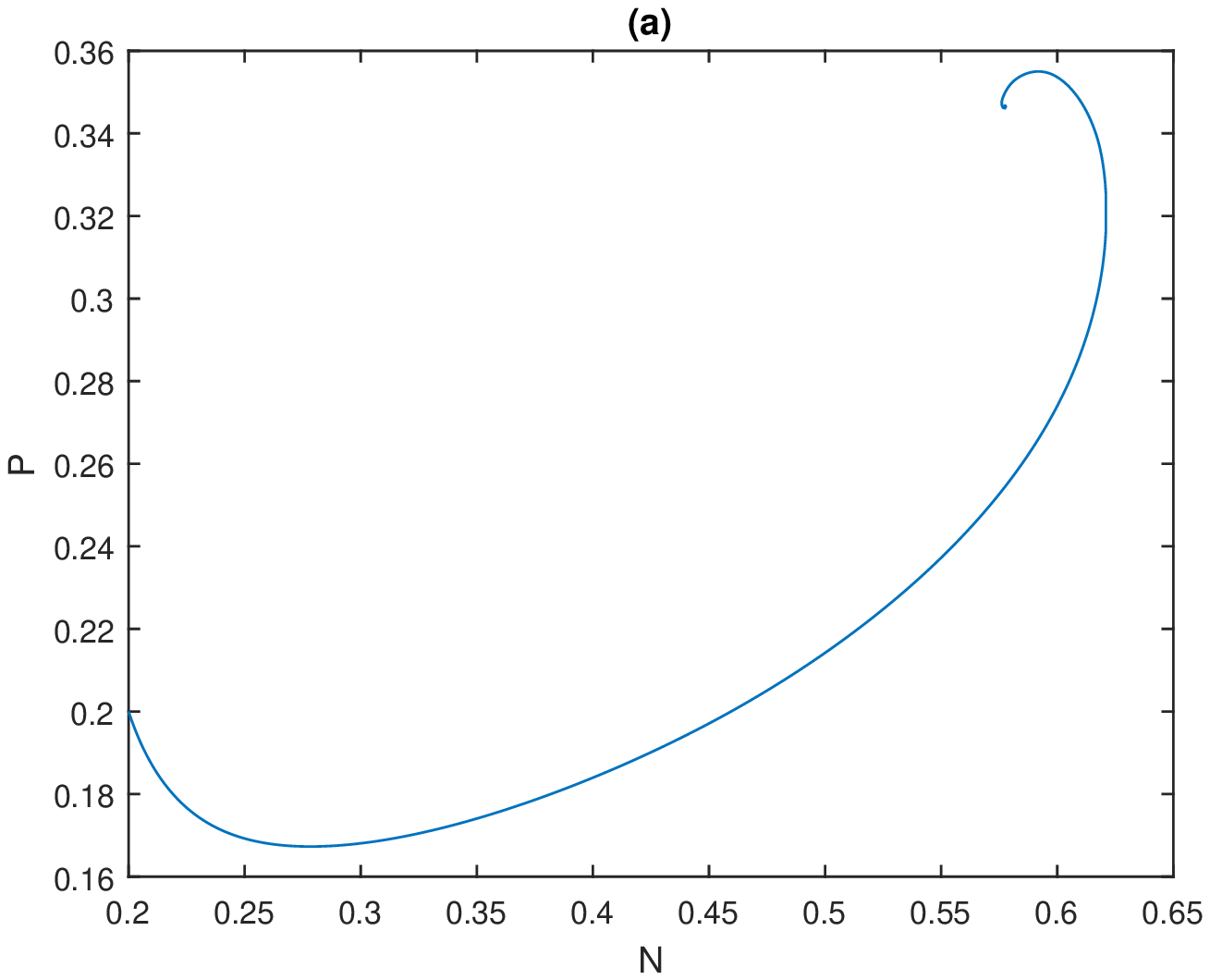}
	\includegraphics[width=2in, height=1.5in]{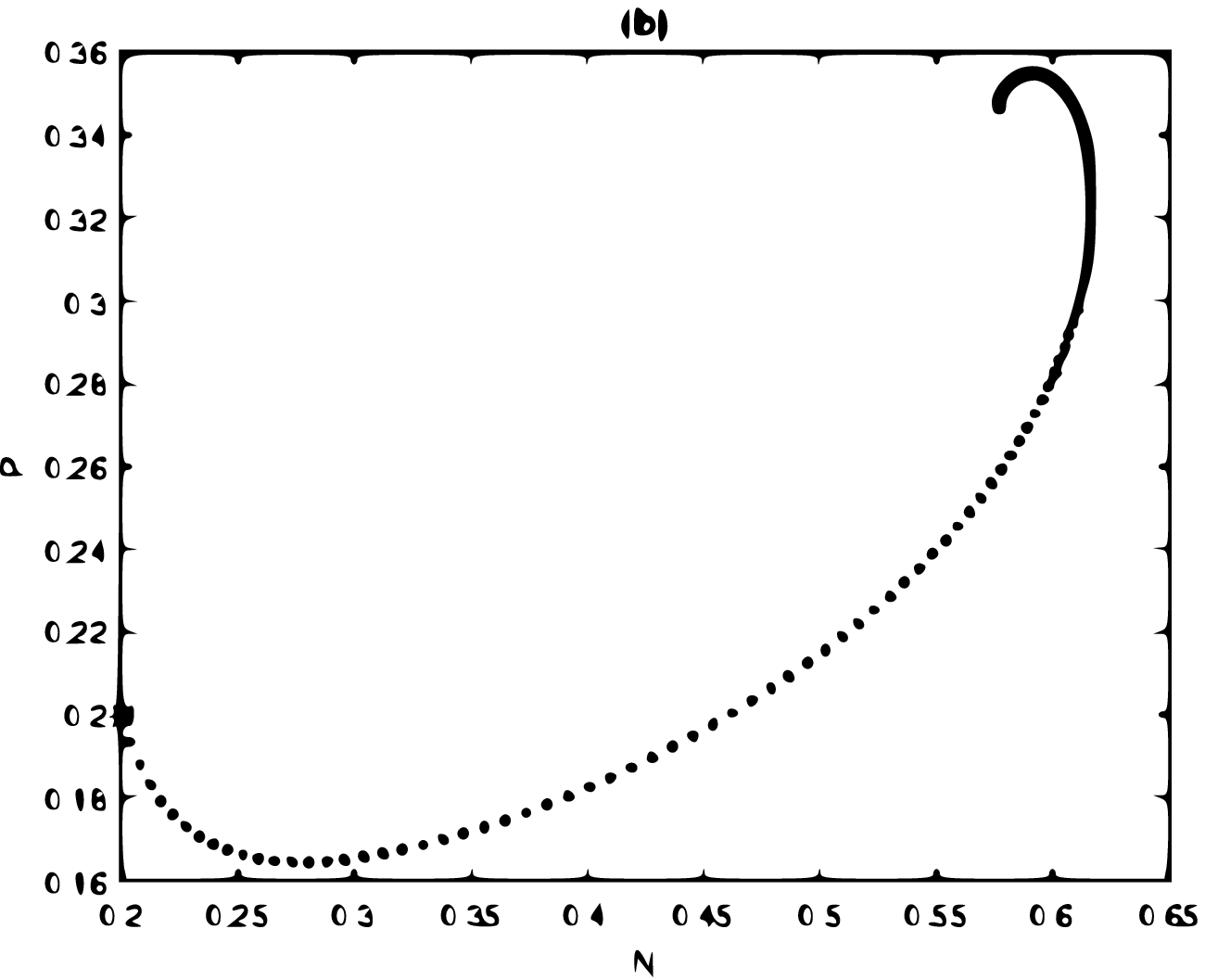}
	\includegraphics[width=2in, height=1.5in]{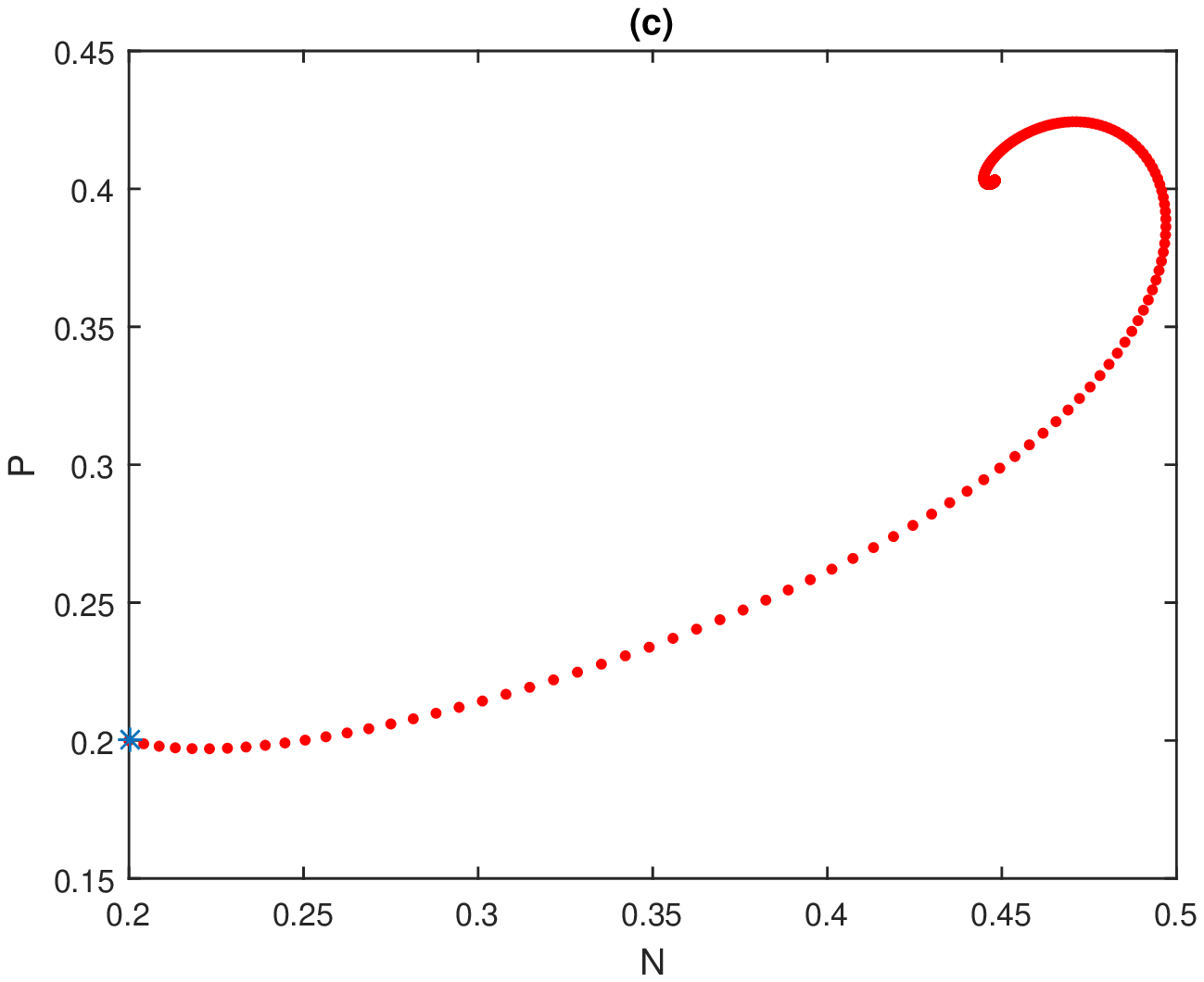}
\end{center}
{\bf Figure 3:} {\it Phase diagrams of the continuous system \eqref{model in continuous system} (Fig. a), the NSFD discrete system \eqref{model in discrete system} (Fig. b) and the Euler system \eqref{model in Euler system1} (Fig. c). These figures show that solution in each case converges to the stable coexistence equilibrium $E^*$ for the parameters $\alpha=0.7, \beta=0.9, \delta=0.6$. Here $G=(1+\alpha \delta)^2 (1+\beta \delta)-\delta (2+\alpha \delta)=1.6533$ and $h=0.1<min\{\frac{G}{H},\frac{2(1+\alpha \delta)^2}{G}\}=min\{2.6293, 2.4393\}$.}

\begin{center}
	\includegraphics[width=2in, height=1.5in]{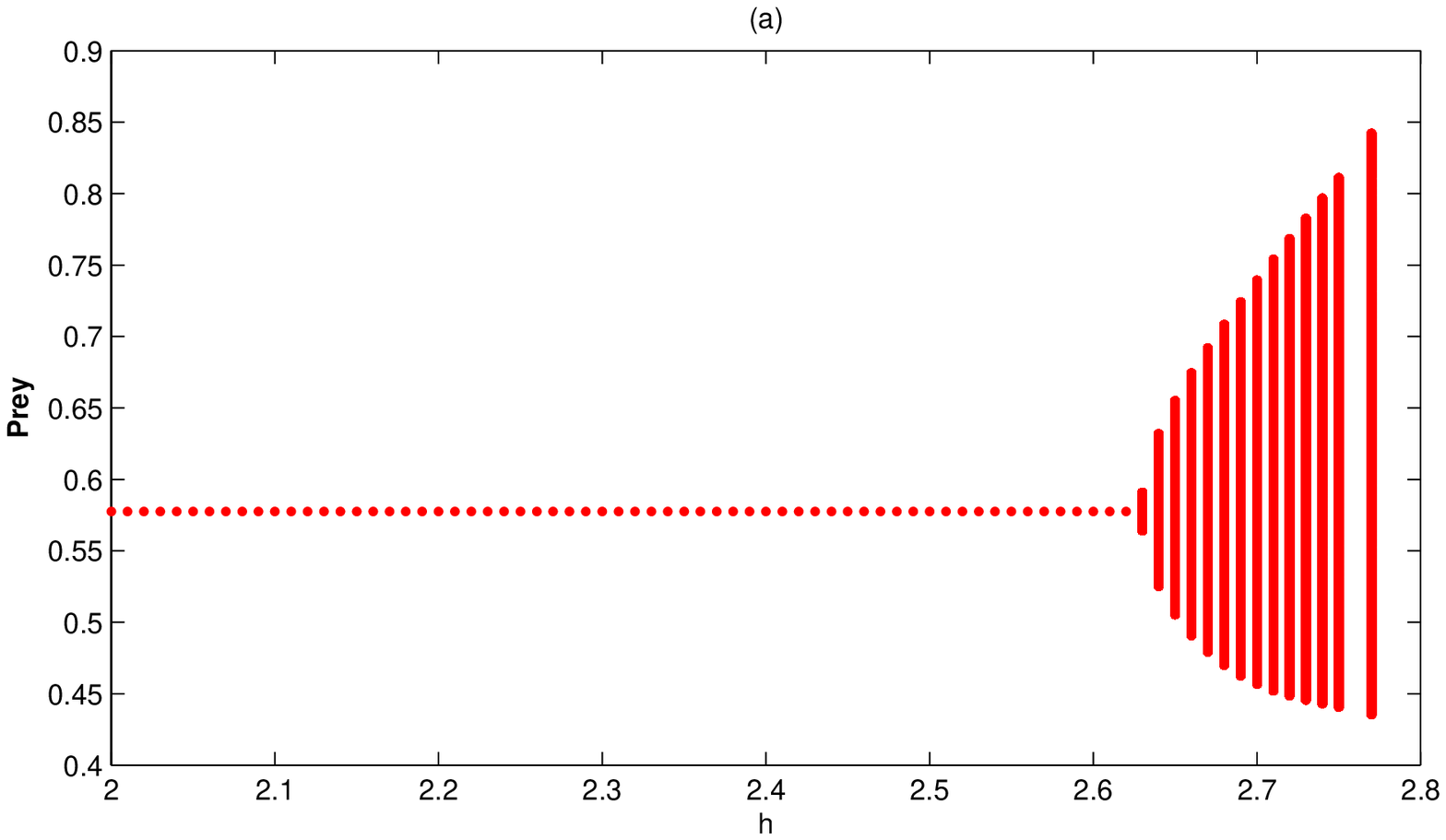}
	\includegraphics[width=2in, height=1.5in]{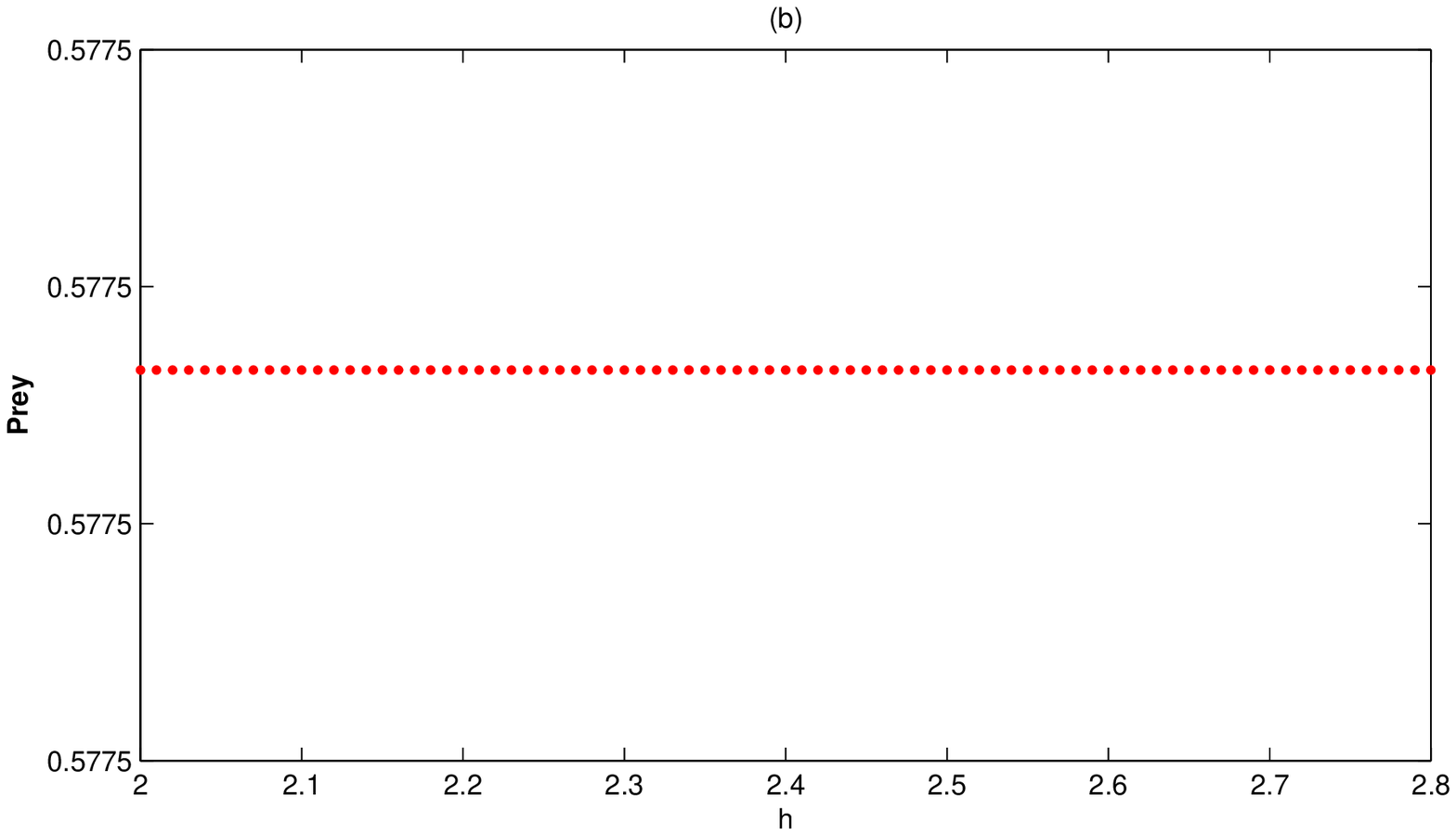}
\end{center}
{\bf Figure 4:} {\it Bifurcation diagrams of prey population of Euler--forward model \eqref{model in Euler system1} (Fig. a) and NSFD model \eqref{model in discrete system} (Fig. b) with step--size $h$ as the bifurcation parameter. All the parameters and initial value are same as in Fig. 3. The first figure shows that the prey population is stable for small step--size $h$ and unstable for higher value of $h$. The second figure shows that the prey population is stable for all step--size $h$.}\\

To compare step--size dependency of the Euler model and NSFD model, we have plotted the bifurcation diagrams of prey population of the systems \eqref{model in Euler system1} and \eqref{model in discrete system} considering step--size $h$ as the bifurcation parameter (Fig. 4) for the same parameter values as in Fig. 3. Fig. 4a shows that behavior of the Euler model depends on the step--size. If step--size is small, system population is stable and the dynamics resembles with the continuous system \eqref{model in continuous system}. As the step--size is increased, system population becomes unstable and therefore the dynamics is inconsistent with the continuous system. However, the second figure (Fig. 4b) shows that the NSFD model \eqref{model in discrete system} remains stable for all $h$, indicating that the dynamics is independent of step--size.
\begin{center}
	\includegraphics[width=2in, height=1.5in]{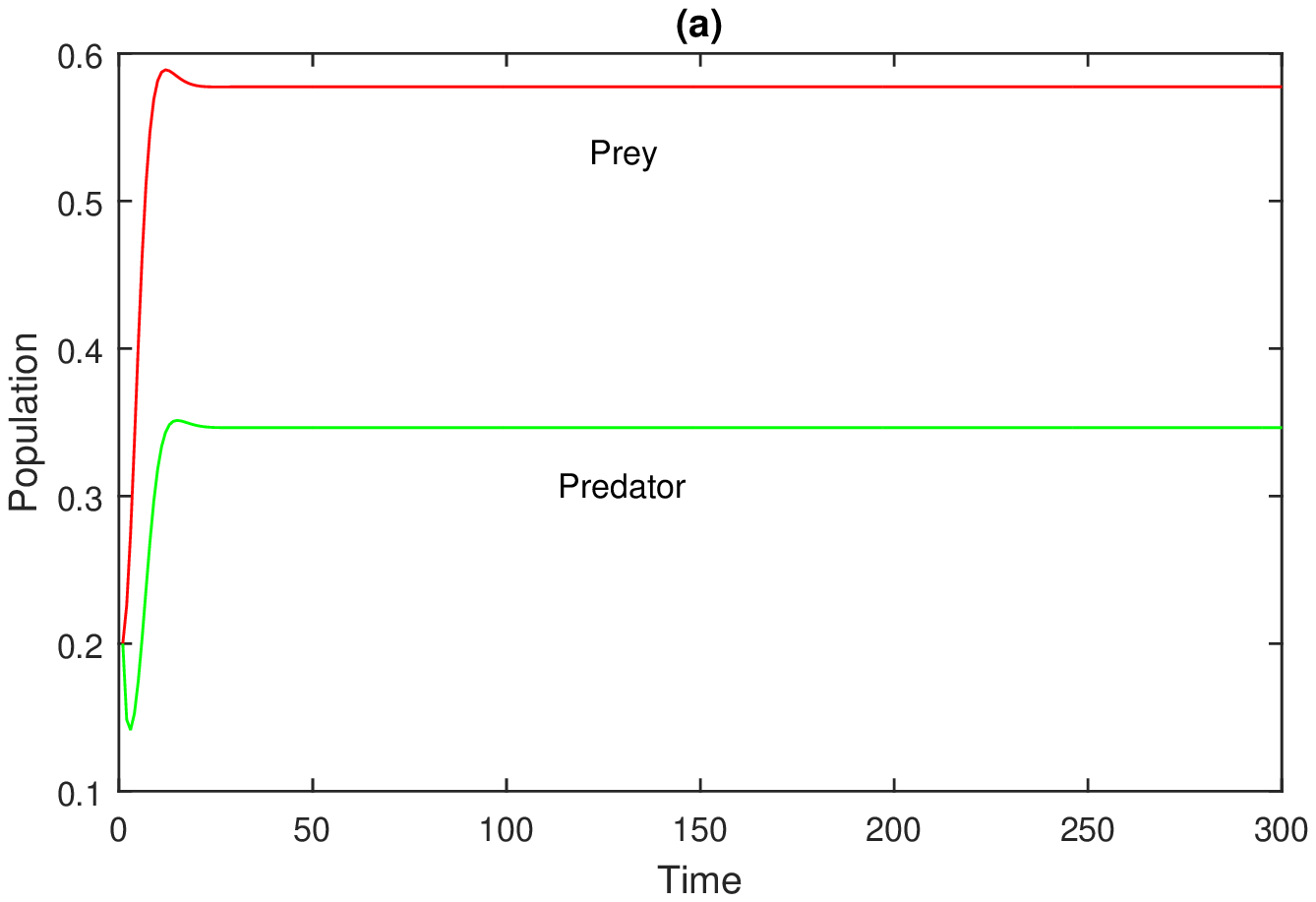}
	\includegraphics[width=2in, height=1.5in]{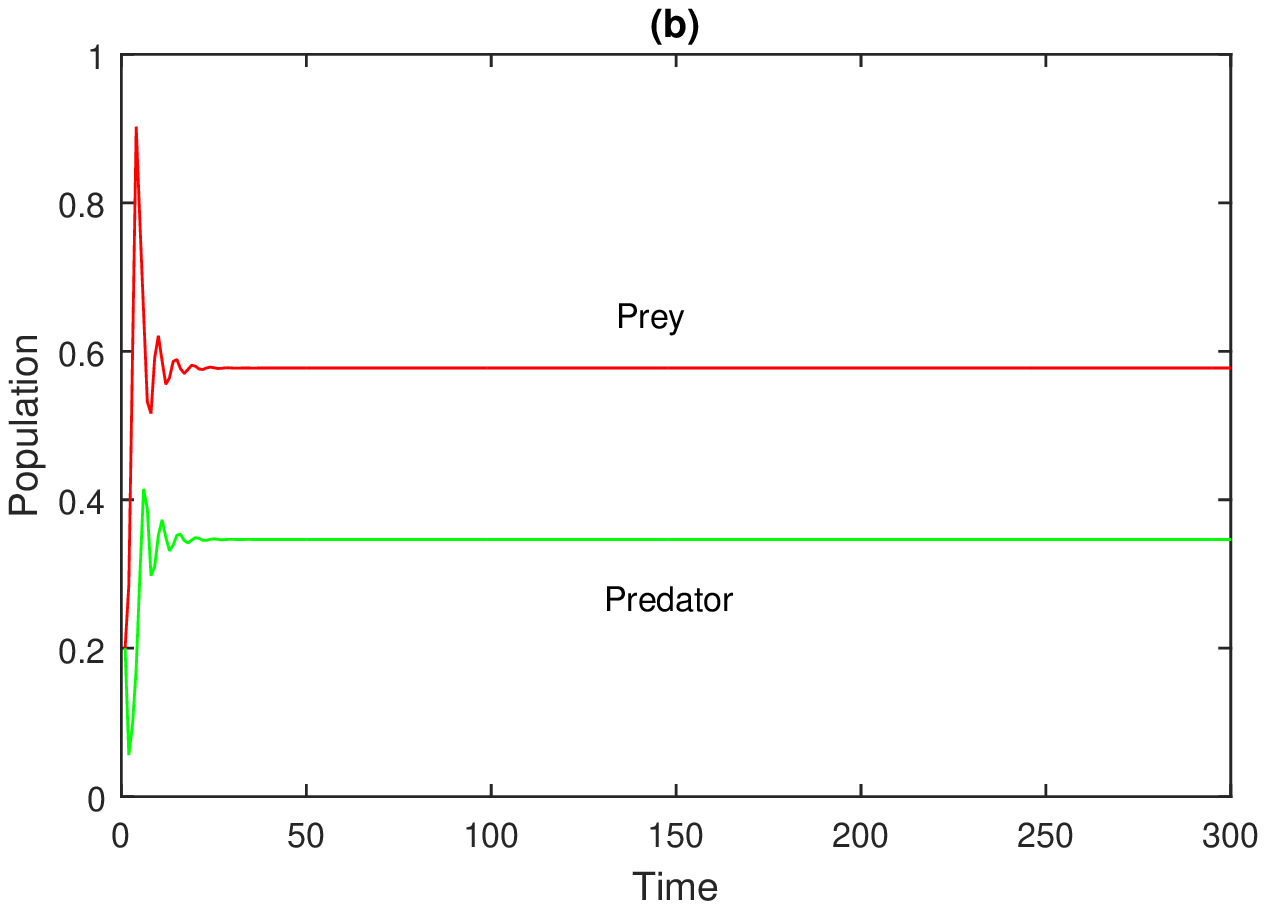}
	\includegraphics[width=2in, height=1.5in]{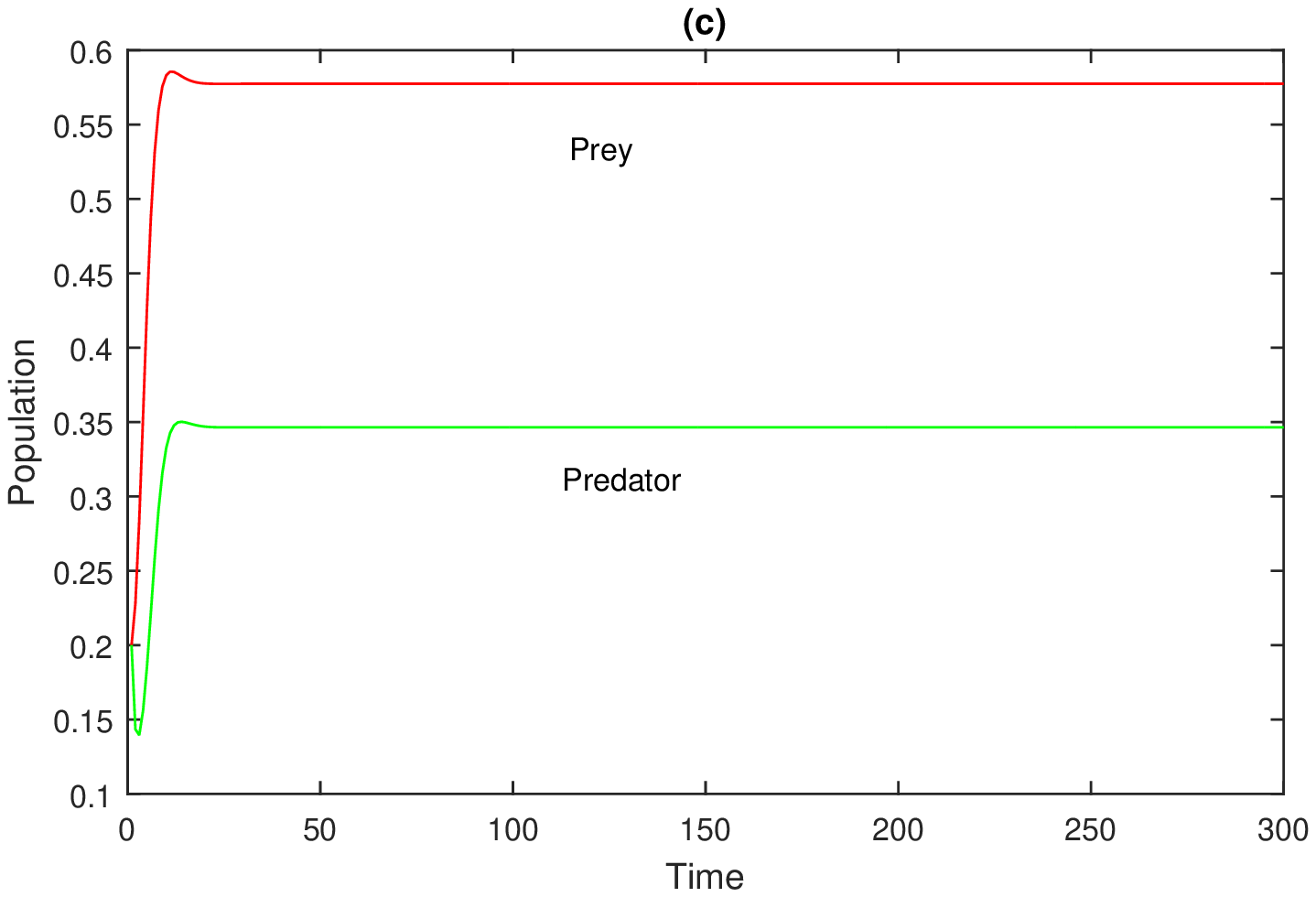}
	\includegraphics[width=2in, height=1.5in]{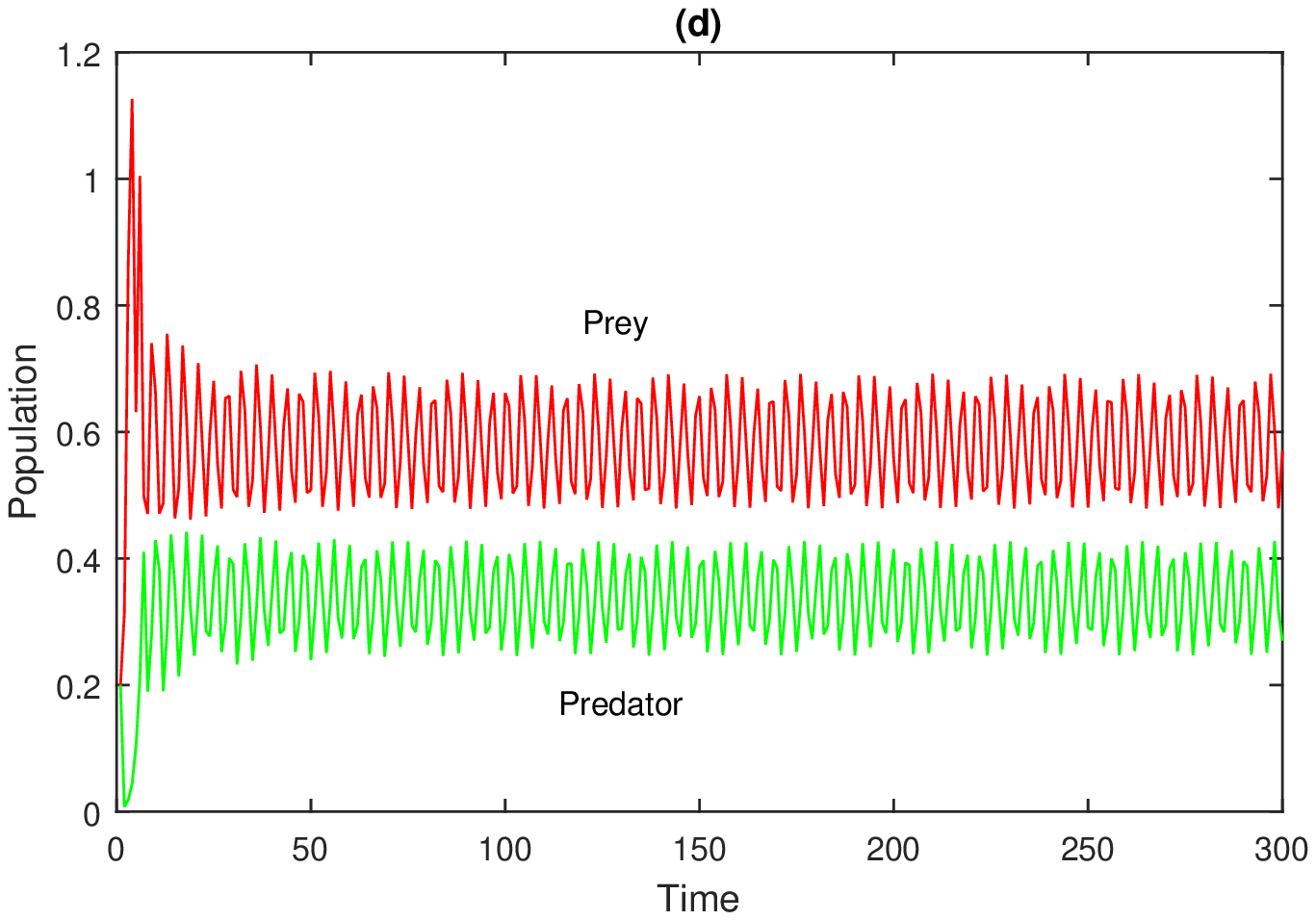}
\end{center}
{\bf Figure 5:} {\it Time series solutions of the NSFD system \eqref{model in discrete system} and Euler system \eqref{model in Euler system1} for two particular values of step-size (h). Here $h=2$ for Figs. (a) \& (b) and $h=2.67$ for Figs. (c) \& (d). Other parameters are in Fig. 4.}\\

In particular, we plot (Fig. 5) time series behavior of the NSFD system \eqref{model in discrete system} and Euler discrete system \eqref{model in Euler system1} for $h=2 (<min\{\frac{G}{H},\frac{2(1+\alpha \delta)^2}{G}\}=min\{2.6293, 2.4393\})$ and for $h=2.67 (>min\{2.6293, 2.4393\})$. The first two figures ($5a \& 5b$) show that both populations are stable when the step-size is $h=2$. Fig. 5c shows that populations of NSFD system \eqref{model in discrete system} remains stable for all step-size, indicating its dynamic consistency with the continuous system, but Fig. 5(d) shows that populations of Euler system \eqref{model in Euler system1} oscillate for $h=2.67$, indicating its dynamic inconsistency with its continuous counterpart.
\section{Summary}
Nonstandard finite difference (NSFD) scheme has gained lot of attentions in the last few years mostly for two reasons. First, it generally does not show spurious behavior as compared to other standard finite difference methods and second, dynamics of the NSFD model does not depend on the step-size. NSFD scheme also reduces the computational cost of traditional finite--difference schemes. In this work, we have studied two discrete systems constructed by NSFD scheme and forward Euler scheme of a well studied two--dimensional Holling-Tanner type predator--prey system with ratio-dependent functional response. We have shown that dynamics of the discrete system formulated by NSFD scheme are same as that of the continuous system. It preserves the local stability of the fixed point and the positivity of the solutions of the continuous system for any step size. Simulation experiments show that NSFD system always converge to the correct steady--state solutions for any arbitrary large value of the step size ($h$) in accordance with the theoretical results. However, the discrete model formulated by forward Euler method does not show dynamic consistency with its continuous counterpart. Rather it shows scheme--dependent instability when step--size restriction is violated.


\bibliographystyle{plain}

\end{document}